\newcommand{\klockan}{\the\hours:{\ifnum\minutes<10 0\fi}\the\minutes}
\newcommand{\tid}{\today\ \klockan}
\newcommand{\prtid}{\smash{\raise 10mm \hbox{\LaTeX ed \tid}}}
\renewcommand{\prtid}{}
\def\sectionmark#1{} 
\def\subsectionmark#1{}
\newcommand{\sectnr}{\ifnum \c@secnumdepth >\z@
                 \thesection.\hskip 1em\relax \fi}
\def\@evenhead{\footnotesize\rm\thepage\hfil\leftmark\hfil\llap{\prtid}}
\def\@oddhead{\footnotesize\rm\rlap{\prtid}\hfil\rightmark\hfil\thepage}
\def\tableofcontents{\section*{Contents} 
 \@starttoc{toc}}
\let\Thebibliography=\thebibliography
\renewcommand{\thebibliography}[1]{\def\@mkboth##1##2{}\Thebibliography{#1}
\addcontentsline{toc}{section}{References}
\frenchspacing 
\setlength{\@topsep}{0pt}
\setlength{\itemsep}{0pt}%
\setlength{\parskip}{0pt plus 2pt}%
}
\def\mdots@{\mathinner.\nonscript\!.%
 \ifx\next,.\else\ifx\next;.\else\ifx\next..\else
 \nonscript\!\mathinner.\fi\fi\fi}
\let\ldots\mdots@
\let\cdots\mdots@
\let\dotso\mdots@
\let\dotsb\mdots@
\let\dotsm\mdots@
\let\dotsc\mdots@
\def\vdots{\vbox{\baselineskip2.8\p@ \lineskiplimit\z@
    \kern6\p@\hbox{.}\hbox{.}\hbox{.}\kern3\p@}}
\def\ddots{\mathinner{\mkern1mu\raise8.6\p@\vbox{\kern7\p@\hbox{.}}%
    \raise5.8\p@\hbox{.}\raise3\p@\hbox{.}\mkern1mu}}
\let\Enumerate=\enumerate
\renewcommand{\enumerate}{\Enumerate%
\setlength{\@topsep}{0pt}
\setlength{\itemsep}{0pt}%
\setlength{\parskip}{0pt plus 1pt}%
\renewcommand{\theenumi}{\textup{(\alph{enumi})}}%
\renewcommand{\labelenumi}{\theenumi}%
}
\let\endEnumerate=\endenumerate
\renewcommand{\endenumerate}{\endEnumerate\unskip}
\def\@seccntformat#1{\csname the#1\endcsname.\quad}
\newcommand{\authortitle}[2]{\author{#1}\title{#2}\markboth{#1}{#2}}
\newcommand{\art}[6]{{\sc #1, \rm #2, \it #3 \bf #4 \rm (#5), \mbox{#6}.}}
\newcommand{\book}[3]{{\sc #1, \it #2, \rm #3.}}
\newcommand{\AND}{{\rm and }}
\newtheoremstyle{descriptive}%
  {\topsep}   
  {\topsep}   
  {\rmfamily} 
  {}          
  {\bfseries} 
  {.}         
  { }         
  {}          
\newtheoremstyle{propositional}%
  {\topsep}   
  {\topsep}   
  {\itshape}  
  {}          
  {\bfseries} 
  {.}         
  { }         
  {}          
\theoremstyle{propositional}
\newtheorem{thm}{Theorem}[section]
\newtheorem{lemma}[thm]{Lemma} 
\newtheorem{cor}[thm]{Corollary}
\theoremstyle{descriptive}
\newtheorem{deff}[thm]{Definition}
\newtheorem{remark}[thm]{Remark}
\renewenvironment{proof}[1][\proofname]{\par
  \pushQED{\qed}%
  \normalfont
  \trivlist
  \item[\hskip\labelsep
        \itshape
    #1\@addpunct{.}]\ignorespaces
}{%
  \popQED\endtrivlist\@endpefalse
}
\newdimen\extrawidth
\def\iintlim#1#2{\setbox0\hbox{$\scriptstyle#1$}%
        \setbox1\hbox{$\scriptstyle#2$}%
        \extrawidth=\wd1 \advance\extrawidth-\wd0
        \ifdim\extrawidth<0pt \extrawidth=0pt\fi%
        \int_{#1\kern\extrawidth \kern .5em}^{#2\kern -\wd1} \kern -.5em%
}
\newcommand{\setm}{\setminus}
\def\vint_#1{\mathchoice%
          {\mathop{\kern 0.2em\vrule width 0.6em height 0.69678ex depth -0.58065ex
                  \kern -0.8em \intop}\nolimits_{\kern -0.4em#1}}%
          {\mathop{\kern 0.1em\vrule width 0.5em height 0.69678ex depth -0.60387ex
                  \kern -0.6em \intop}\nolimits_{#1}}%
          {\mathop{\kern 0.1em\vrule width 0.5em height 0.69678ex depth -0.60387ex
                  \kern -0.6em \intop}\nolimits_{#1}}%
          {\mathop{\kern 0.1em\vrule width 0.5em height 0.69678ex depth -0.60387ex
                  \kern -0.6em \intop}\nolimits_{#1}}}
\def\vintslides_#1{\mathchoice%
          {\mathop{\kern 0.1em\vrule width 0.5em height 0.697ex depth -0.581ex
                  \kern -0.6em \intop}\nolimits_{\kern -0.4em#1}}%
          {\mathop{\kern 0.1em\vrule width 0.3em height 0.697ex depth -0.604ex
                  \kern -0.4em \intop}\nolimits_{#1}}%
          {\mathop{\kern 0.1em\vrule width 0.3em height 0.697ex depth -0.604ex
                  \kern -0.4em \intop}\nolimits_{#1}}%
          {\mathop{\kern 0.1em\vrule width 0.3em height 0.697ex depth -0.604ex
                  \kern -0.4em \intop}\nolimits_{#1}}}
\DeclareMathOperator{\diam}{diam}
\DeclareMathOperator{\Lip}{Lip}
\DeclareMathOperator{\spt}{supp}
\DeclareMathOperator{\capc}{cap}
\DeclareMathOperator{\Capc}{Cap}
\newcommand{\loc}{_{\rm loc}}
{\catcode`p =12 \catcode`t =12 \gdef\eeaa#1pt{#1}}      
\def\accentadjtext#1{\setbox0\hbox{$#1$}\kern   
                \expandafter\eeaa\the\fontdimen1\textfont1 \ht0 }
\def\accentadjscript#1{\setbox0\hbox{$#1$}\kern 
                \expandafter\eeaa\the\fontdimen1\scriptfont1 \ht0 }
\def\accentadjscriptscript#1{\setbox0\hbox{$#1$}\kern   
                \expandafter\eeaa\the\fontdimen1\scriptscriptfont1 \ht0 }
\def\accentadjtextback#1{\setbox0\hbox{$#1$}\kern       
                -\expandafter\eeaa\the\fontdimen1\textfont1 \ht0 }
\def\accentadjscriptback#1{\setbox0\hbox{$#1$}\kern     
                -\expandafter\eeaa\the\fontdimen1\scriptfont1 \ht0 }
\def\accentadjscriptscriptback#1{\setbox0\hbox{$#1$}\kern 
                -\expandafter\eeaa\the\fontdimen1\scriptscriptfont1 \ht0 }
\newcommand{\dmu}{d\mu}
\newcommand{\Om}{\Omega}
\renewcommand{\phi}{\varphi}
\newcommand{\p}{{$p\mspace{1mu}$}}
\newcommand{\R}{\mathbb{R}}
\newcommand{\N}{\mathbf{N}}
\newcommand{\limminus}{{\mathchoice{\raise.17ex\hbox{$\scriptstyle -$}}
                {\raise.17ex\hbox{$\scriptstyle -$}}
                {\raise.1ex\hbox{$\scriptscriptstyle -$}}
                {\scriptscriptstyle -}}}
\newcommand{\limplus}{{\mathchoice{\raise.17ex\hbox{$\scriptstyle +$}}
                {\raise.17ex\hbox{$\scriptstyle +$}}
                {\raise.1ex\hbox{$\scriptscriptstyle +$}}
                {\scriptscriptstyle +}}}
\newcommand{\limpm}{{\mathchoice{\raise.17ex\hbox{$\scriptstyle \pm$}}
                {\raise.17ex\hbox{$\scriptstyle \pm$}}
                {\raise.16ex\hbox{$\scriptscriptstyle \pm$}}
                {\scriptscriptstyle \pm}}}
\newcommand{\Np}{N^{1,p}}
\newcommand{\Hn}{\mathbb H^n}
\newcommand{\g}{\gamma}
\newcommand{\setcurrentlabel}[1]{\def\@currentlabel{#1}}
\numberwithin{equation}{section}
\begin{document}

\authortitle{}
{Sharp capacitary estimates for rings in metric spaces}

\author{Nicola Garofalo}
\address{Department of Mathematics\\Purdue University \\
West Lafayette, IN 47907, USA} \email[Nicola
Garofalo]{garofalo@math.purdue.edu}
\thanks{First author supported in part by NSF Grant DMS-0701001}

\author{Niko Marola}
\address{Department of Mathematics and Systems Analysis \\
Helsinki University of Technology \\
P.O. Box 1100 FI-02015 TKK \\
Finland} \email[Niko Marola]{niko.marola@tkk.fi}
\thanks{Second author supported by the Academy of Finland and Emil
Aaltosen s\"a\"ati\"o}

\date{}

\subjclass[2000]{Primary: 31B15, 31C45; Secondary: 31C15}

\keywords{Capacity, doubling measure, Green function, Newtonian
space, \p-harmonic, Poincar\'e inequality, singular function,
Sobolev space}

\maketitle

\noindent{\small {\bf Abstract}. We establish sharp estimates for
the $p$-capacity of metric rings with unrelated radii in metric
measure spaces equipped with a doubling measure and supporting a
Poincar\'e inequality. These estimates play an essential role in the
study of the local behavior of \p-harmonic Green's functions. }

\section{Introduction}

In this paper we establish sharp capacitary estimates for the metric
rings with unrelated radii in a locally doubling metric measure
space supporting a local $(1,p)$-Poincar\`e inequality. A motivation
for pursuing these estimates comes from the study of the asymptotic
behavior of
 \p-harmonic Green's functions in this geometric setting. Similarly
to the classical case (for the latter the reader should see
\cite{LSW}, \cite{Ser1} and \cite{Ser2}), capacitary estimates play
a crucial role in studying the local behavior of such singular
functions. For this aspect we refer the reader to the forthcoming
paper by Danielli and the authors~\cite{DaGaMa}.

Perhaps the most important model of a metric space with a rich
non-Euclidean geometry is the Heisenberg group $\Hn$, whose
underlying manifold is $\mathbb C^n \times \mathbb R$ with the group law
$(z,t)\circ(z',t') = (z+z',t+t' - \frac{1}{2} \mathcal Im (z
\overline z'))$. Kor\'anyi and Reimann~\cite{KoRe} first computed
explicitly the $Q$-capacity of a metric ring in $\Hn$. Here $Q =
2n+2$ indicates the homogeneous dimension of $\mathbb H^n$ attached
to the non-isotropic group dilations $\delta_\lambda(z,t) = (\lambda
z,\lambda^2 t)$. Their method makes use of a suitable choice of
``polar'' coordinates in the group.

The Heisenberg group is the prototype of a general class of
nilpotent stratified Lie groups, nowadays known as Carnot groups. In
this more general context, Heinonen and Holopainen~\cite{HeHo}
proved sharp estimates for the $Q$-capacity of a ring. Again, here
$Q$ indicates the homogeneous dimension attached to the
non-isotropic dilations associated with the grading of the Lie
algebra.

In the paper \cite{CaDaGa} Capogna, Danielli and the first named
author established sharp $p$-capacitary estimates, for the range
$1<p<\infty$, for Carnot--Carath\'eodory rings associated with a
system of vector fields of H\"ormander type. In particular they
proved that for a ring centered at a point $x$ the \p-capacity of
the ring itself changes drastically depending on whether $1<p<Q(x)$,
$p=Q(x)$ or $p>Q(x)$. Here, $Q(x)$ is the pointwise dimension at
$x$, and such number in general differs from the so-called local
homogeneous dimension associated with a fixed compact set containing
$x$. This unsettling phenomenon is not present, for example, in the
analysis of Carnot groups since in that case $Q(x)\equiv Q$, where
$Q$ is the above mentioned homogeneous dimension of the group.

In ~\cite{KiMa96},\cite{KiMaNov} Kinnunen and Martio developed a
capacity theory based on the definition of Sobolev functions on
metric spaces. They also provided sharp upper bounds for the
capacity of a ball.

The results in the present paper encompass all previous ones and
extend them. For the relevant geometric setting of this paper we
refer the reader to Section \ref{S:prelim}.

Here, we confine ourselves to mention that a fundamental example of
the spaces included in this paper is obtained by endowing a
connected Riemannian manifold $M$ with the Carath\'eodory metric
$d$ associated with a given subbundle of the tangent bundle,
see \cite{Ca}. If such subbundle generates the tangent space at
every point, then thanks to the theorem of Chow \cite{Chow} and
Rashevsky \cite{Ra} $(M,d)$ is a metric space. Such metric spaces
are known as sub-Riemannian or Carnot-Carath\'eodory (CC) spaces. By
the fundamental works of Rothschild and Stein \cite{RS}, Nagel,
Stein and Wainger \cite{NSW}, and of Jerison \cite{J}, every CC
space is locally doubling, and it locally satisfies a
$(p,p)$-Poincar\`e inequality for any $1\leq p<\infty$. Another
basic example is provided by a Riemannian manifold $(M^n,g)$ with
nonnegative Ricci tensor. In such case thanks to the Bishop
comparison theorem the doubling condition holds globally, see e.g.
\cite{Ch}, whereas the $(1,1)$-Poincar\`e inequality was proved by
Buser \cite{Bu}. An interesting example to which our results apply
and that does not fall in any of the two previously mentioned
categories is the space of two infinite closed cones
$X=\{(x_1,\ldots, x_n)\in\R^n:\ x_1^2+\ldots +x_{n-1}^2\leq x_n^2\}$
equipped with the Euclidean metric of $\R^n$ and with the Lebesgue
measure. This space is Ahlfors regular, and it is shown in Haj\l
asz--Koskela~\cite[Example 4.2]{HaKo} that a $(1,p)$-Poincar\'e
inequality holds in $X$ if and only if $p>n$. Another example is
obtained by gluing two copies of closed $n$-balls $\{x\in \R^n:\
|x|\leq 1\}$, $n\geq 3$, along a line segment. In this way one
obtains an Ahlfors regular space that supports a $(1,p)$-Poincare
inequality for $p>n-1$.

A thorough overview of analysis on metric spaces can be found in
Heinonen~\cite{heinonen}. One should also consult
Semmes~\cite{Semmes} and David and Semmes~\cite{DaSem}.

The present note is organized as follows. In Section \ref{S:prelim}
we list our main assumptions and gather the necessary background
material. In Section \ref{S:ce} we establish sharp capacitary
estimates for spherical rings with unrelated radii. Section
\ref{S:singfunctions} closes the paper with a small remark on the
existence of \p-harmonic Green's functions. In the setting of metric
measure spaces Holopainen and Shanmugalingam \cite{HoSha}
constructed a \p-harmonic Green's function, called a singular
function there, having most of the characteristics of the
fundamental solution of the Laplace operator. See also
Holopainen~\cite{Holopainen}.

\subsection*{Acknowledgements}

This paper was completed while the second author was visiting Purdue
University in 2007--2008. He wishes to thank the Department of
Mathematics for the hospitality and several of its faculty for
fruitful conversations.

\section{Preliminaries}
\label{S:prelim}

We begin by introducing our main assumptions on the metric space $X$
and on the measure $\mu$.

\subsection{General assumptions} \label{assumptions}

Throughout the paper $X=(X,d,\mu)$ is a locally compact metric space
endowed with a metric $d$ and a positive Borel regular measure $\mu$
such that $0<\mu(B(x,r)) <\infty$ for all balls $B(x,r):=\{y\in X:
d(y,x)<r\}$ in~$X$. We assume that for every compact set $K\subset
X$ there exist constants $C_K \geq 1$, $R_K>0$ and $\tau_K \ge 1$,
such that for any $x\in K$ and every $0<2r\leq R_K$, one has:
\begin{itemize}
\item[(i)] the closed balls $\overline B(x,r)=\{y\in X:d(y,x)\leq r\}$ are compact;
\item[(ii)] (local doubling condition) $\mu(B(x,2r)) \le C_K \mu(B(x,r))$;
\item[(iii)] (local weak $(1,p_0)$-Poincar\'e inequality) there exists $1<p_0<\infty$ such that for all $u\in N^{1,p_0}(B(x,\tau_Kr))$ and all weak
upper gradients $g_u$ of $u$
\begin{equation*} \label{PI-ineq}
        \vint_{B(x,r)} |u-u_{B(x,r)}| \,\dmu
        \le C_K r \Big( \vint_{B(x,\tau_K r)} g_u^{p_0} \,\dmu \Big)^{1/p_0},
\end{equation*}
where $ u_{B(x,r)} :=\vint_{B(x,r)}u \, d\mu
:=\int_{B(x,r)} u\, d\mu/\mu(B(x,r))$. Given an open set
$\Om\subseteq X$, and $1<p<\infty$, the notation $N^{1,p}(\Om)$
indicates the $p$-Newtonian space on $\Om$ defined below.
\end{itemize}

Hereafter, the constants $C_K, R_K$ and $\tau_K$ will be referred to
as the \emph{local parameters} of $K$. We also say that a constant $C$
depends on the local doubling constant of $K$ if $C$ depends on $C_K$.

The above assumptions encompass, e.g., all Riemannian manifolds with
Ric $\geq 0$, but they also include all Carnot--Carath\'eodory
spaces, and therefore, in particular, all Carnot groups. For a
detailed discussion of these facts  we refer the reader to the paper
by  Garofalo--Nhieu~\cite{GaNhi}. In the case of
Carnot--Carath\'eodory spaces, recall that if the Lie algebra
generating vector fields grow at infinity faster than linearly, then
the compactness of metric balls of large radii may fail in general.
Consider for instance in $\R$ the smooth vector field of H\"ormander
type $X_1=(1+x^2)\frac{d}{dx}$. Some direct calculations prove that
the distance relative to $X_1$ is given by
$d(x,y)=|\arctan(x)-\arctan(y)|$, and therefore, if $r\geq \pi/2$,
we have $B(0,r)=\R$.

\subsection{Local doubling property}
We note that assumption (ii) implies that for every compact set
$K\subset X$ with local parameters $C_K$ and $R_K$, for any $x\in K$
and every $0<r\leq R_K$, one has for $1\leq \lambda \leq R_K/r$,
\begin{equation}\label{dc}
\mu(B(x,\lambda r)) \leq C\lambda^Q\mu(B(x,r)), \end{equation}
 where
$Q=\log_2C_K$, and the constant $C$ depends only on the local doubling
constant $C_K$. The exponent $Q$ serves as a local dimension of the
doubling measure $\mu$ restricted to the compact set $K$. In
addition to such local dimension, for $x\in X$ we define the
\emph{pointwise dimension} $Q(x)$ by
\begin{multline*}
Q(x) = \sup\{q > 0:\ \exists C>0\ \textrm{ such that } \\
\lambda^q\mu(B(x,r)) \leq C\mu(B(x,\lambda r)),
\textrm{ for all } \lambda\geq1, 0<r<\infty\}.
\end{multline*}

The inequality \eqref{dc} readily implies that $Q(x) \leq Q$ for
every $x\in K$. Moreover, it follows that
\begin{equation} \label{lowerbound}
\lambda^{Q(x)}\mu(B(x,r)) \leq C\mu(B(x,\lambda r))
\end{equation}
for any $x \in K$, $0<r\leq R_K$ and $1\leq \lambda \leq R_K/r$,
and the constant $C$ depends on the local doubling constant $C_K$.
Furthermore, for all $0<r\leq R_K$ and $x\in K$
\begin{equation} \label{bounds}
C_1r^Q\leq \frac{\mu(B(x,r))}{\mu(B(x,R_K))} \leq C_2r^{Q(x)},
\end{equation}
where $C_1=C(K,C_K)$ and $C_2= C(x,K,C_K)$.

For more on doubling measures, see, e.g. Heinonen~\cite{heinonen}
and the references therein.

\subsection{Upper gradients}
A path is a continuous mapping from a compact interval, and we say that a
nonnegative Borel function $g$ on $X$ is an \emph{upper gradient}
of an extended real valued function $f$
on $X$ if for all rectifiable paths $\gamma$
joining points $x$ and $y$ in $X$ we have
\begin{equation} \label{ug-cond}
|f(x)-f(y)|\le \int_\gamma g\,ds.
\end{equation}
whenever both $f(x)$ and $f(y)$ are finite, and $\int_\g g\,
ds=\infty $ otherwise. See Cheeger~\cite{Cheeger} and
Shanmugalingam~\cite{Sh-rev} for a detailed discussion of upper
gradients.

If $g$ is a nonnegative measurable function on $X$
and if (\ref{ug-cond}) holds for \p-almost every path,
then $g$ is a \emph{weak upper gradient} of~$f$.
By saying that (\ref{ug-cond}) holds for \p-almost every path
we mean that it fails only for a path family with zero \p-modulus
(see, for example, \cite{Sh-rev}).

A function $f:X\to\R$ is \emph{Lipschitz}, denoted by
$f \in \Lip(X)$, if there exists a constant
$L \geq 0$ such
that $|f(x)-f(y)| \leq Ld(x,y)$ for every $x,y\in X$. The
\emph{upper pointwise Lipschitz constant} of $f$ at $x$ defined by
\[
\Lip f(x) = \limsup_{r \to 0}\sup_{y\in B(x,r)}\frac{|f(y)-f(x)|}{r}
\]
is an upper gradient of $f$. We note that for $c\in\R$, $\Lip f(x) =
0$ for $\mu$-a.e. $x\in \{y\in X: f(y)=c\}$.

If $f$ has an upper gradient in $L^p(X)$, then
it has a \emph{minimal weak upper gradient} $g_f \in L^p(X)$
in the sense
 that for every weak upper gradient $g \in L^p(X)$ of $f$,
$g_f \le g$ $\mu$-almost everywhere (a.e.),
see Corollary~3.7 in Shanmugalingam~\cite{Sh-harm}, and
Lemma~2.3 in J.~Bj\"orn~\cite{Bj} for the pointwise characterization
of $g_f$.

Thanks to the results in Cheeger~\cite{Cheeger}, if $X$ satisfies
assumptions (ii) and (iii), then for $f\in\Lip(X)$ one has $g_f(x) =
\Lip f(x)$ for $\mu$-a.e. $x\in X$.

We recall the following version of the chain rule.

\medskip

\begin{lemma} \label{lemma:chainrule}
Let $u\in\Lip(X)$ and $f:\R\to\R$ be absolutely continuous and differentiable.
Then
\[
g_{f\circ u}(x) \leq |(f'\circ u)(x)|\Lip u(x)
\]
for $\mu$-almost every $x\in X$.
\end{lemma}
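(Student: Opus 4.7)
The plan is to reduce the claim to showing that $h(x) := |(f'\circ u)(x)|\,\Lip u(x)$ is a weak upper gradient of $f\circ u$, and then invoke the minimality of $g_{f\circ u}$. The reduction uses the Cheeger identity $g_u(x) = \Lip u(x)$ for $\mu$-a.e.\ $x$, which is stated in the excerpt and applies because $u\in\Lip(X)$. Hence if I can verify that $|f'\circ u|\,g_u$ is a weak upper gradient of $f\circ u$, then $g_{f\circ u}\leq |f'\circ u|\,g_u = |f'\circ u|\,\Lip u$ a.e., which is exactly what is claimed.

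To check the upper gradient property, I would fix a rectifiable path $\gamma\colon[0,\ell]\to X$ parametrised by arclength and consider the auxiliary function $v(t):=u(\gamma(t))$. Since $u$ is Lipschitz and $\gamma$ is $1$-Lipschitz in its arclength parametrisation, $v$ is Lipschitz on $[0,\ell]$, hence absolutely continuous and differentiable almost everywhere. Because $f$ is absolutely continuous and (by hypothesis) everywhere differentiable, the classical chain rule for AC composition with a Lipschitz inner function yields $(f\circ v)'(t)=f'(v(t))\,v'(t)$ for a.e.\ $t$. Integrating,
\[
  |f(u(\gamma(\ell))) - f(u(\gamma(0)))|
  \;\leq\; \int_0^\ell |f'(v(t))|\,|v'(t)|\,dt.
\]
For \p-a.e.\ rectifiable $\gamma$ one has $|v'(t)|\leq g_u(\gamma(t))$ for a.e.\ $t$; this is the standard consequence of $g_u$ being a weak upper gradient of $u$ applied to subarcs of $\gamma$. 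Substituting gives
\[
  |f(u(\gamma(\ell))) - f(u(\gamma(0)))|
  \;\leq\; \int_\gamma |f'\circ u|\,g_u\,ds,
\]
so $|f'\circ u|\,g_u$ is indeed a weak upper gradient of $f\circ u$, and the minimality argument finishes the proof.

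The main technical point to nail down is the chain rule $(f\circ v)'=(f'\circ v)\,v'$ a.e.\ on the line. This is where everywhere-differentiability of $f$, together with the Lipschitz (not merely AC) character of $v$, is used: a Lipschitz change of variable maps null sets to null sets, so the exceptional set of non-differentiability of $f\circ v$ is controlled. Everything else—the passage $g_u=\Lip u$ for Lipschitz $u$, the comparison $|v'|\leq g_u\circ\gamma$ for \p-a.e.\ path, and the minimality of $g_{f\circ u}$—is standard in the Newtonian framework described in the preliminaries.
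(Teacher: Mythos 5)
The paper does not actually prove this lemma --- it is ``recalled'' as a known version of the chain rule, with no argument given --- so your proposal can only be judged on its own merits. Your overall reduction is the natural one: verify that $|f'\circ u|\,g_u$ is a weak upper gradient of $f\circ u$ curve by curve, then use the Cheeger identification $g_u=\Lip u$ a.e.\ and the minimality of $g_{f\circ u}$. The ingredients you call standard are indeed standard in the Newtonian framework: the subcurve inequality giving $|v'(t)|\le g_u(\gamma(t))$ for a.e.\ $t$ along $p$-a.e.\ rectifiable curve, and the fact that changing a weak upper gradient on a $\mu$-null set preserves the weak upper gradient property.

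The genuine gap is the word ``Integrating'', and you have in fact misidentified the delicate point. The a.e.\ pointwise chain rule $(f\circ v)'=(f'\circ v)\,v'$ is the easy part: since $f$ is differentiable \emph{everywhere}, the classical chain rule applies at every $t$ where $v'(t)$ exists, so no Serrin--Varberg-type care is needed there. What does \emph{not} follow from your hypotheses is that $F=f\circ v$ is absolutely continuous, and without that an a.e.\ derivative does not control increments. Indeed, there exist $f$ absolutely continuous and everywhere differentiable, and $v$ Lipschitz, with $f\circ v$ of infinite variation: concentrate $f'$ in spikes on disjoint intervals $J_k$ accumulating at a point where $f'=0$, and let $v$ oscillate across $J_k$ roughly $n_k$ times with $\sum_k n_k|J_k|<\infty$ but $\sum_k n_k\int_{J_k}|f'|=\infty$. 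For such curves your integration step is invalid as justified. The conclusion is nonetheless true, and the repair is short: since $f$ is AC, $|F(\ell)-F(0)|=\bigl|\int_{v(0)}^{v(\ell)}f'(s)\,ds\bigr|$, and the one-dimensional area formula (Banach indicatrix $N(v,s)$) for the Lipschitz function $v$ gives
\[
\int_0^\ell |f'(v(t))|\,|v'(t)|\,dt=\int_\R |f'(s)|\,N(v,s)\,ds
\;\ge\; \int_{v(0)\wedge v(\ell)}^{v(0)\vee v(\ell)}|f'(s)|\,ds,
\]
because $N(v,s)\ge 1$ for every $s$ between $v(0)$ and $v(\ell)$ by the intermediate value theorem; this yields the upper gradient inequality for \emph{every} rectifiable curve, bypassing absolute continuity of the composition entirely. (Alternatively: when $\int_\gamma|f'\circ u|\,g_u\,ds<\infty$, the same indicatrix bound shows $F$ is BV; being also continuous and satisfying Luzin's condition (N) --- $v$ Lipschitz and $f$ AC both map null sets to null sets --- $F$ is AC by the Banach--Zarecki theorem, and then your integration is legitimate; curves with infinite right-hand side are trivial.) A final pedantic remark: the statement implicitly presumes $f\circ u$ admits an $L^{p_0}$ weak upper gradient so that $g_{f\circ u}$ is defined; in the paper's only application the outer function $h$ is Lipschitz, so this is harmless, but it is worth a sentence in a self-contained proof.
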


\subsection{Capacity}
Let $\Om \subset X$ be open and $E \subset \Om$ a Borel set.
The \emph{relative \p-capacity} of $E$ with respect to $\Om$ is the number
\begin{equation*}
  \Capc_p (E,\Om) =\inf\int_\Om g_u^p\,d\mu,
\end{equation*}
where the infimum is taken over all functions $u \in \Np(X)$ such that
$u=1$ on $E$ and $u=0$ on $X\setminus\Om$. If such a function do
not exist, we set $\Capc_p (K,\Om)=\infty$. When $\Om=X$ we simply
write $\Capc_p(E)$.

Observe that if $E\subset \Om$ is compact the infimum above could be
taken over all functions
$u \in \Lip_0(\Om)=\{f \in \Lip(X):\ f=0 \textrm{ on } X\setminus\Om\}$
such that $u=1$ on $E$.

Suppose that $\Om \subset X$ is open. A \emph{condenser} is a
triple $(E,F;\Om)$, where $E,F \subset \Om$ are disjoint non-empty compact
sets. For $1\leq p < \infty$
the \p-\emph{capacity of a condenser} is the number
\[
\capc_p(E,F;\Om) = \inf\int_\Om g^p\, d\mu,
\]
where the infimum is taken over all \p-weak upper gradients $g$
of all functions $u$ in $\Om$ such that $u = 0$ on $E$, $u = 1$ on $F$,
and $0\leq u \leq 1$.

For other properties as well as equivalent definitions of
the capacity we refer to Kilpel\"ainen et al.~\cite{KiKiMa},
Kinnunen--Martio~\cite{KiMa96, KiMaNov}, and
Kallunki--Shanmugalingam~\cite{KaSh}. See also Gol'dshtein and
Troyanov~\cite{GoTro}.

\subsection{Newtonian spaces}

We define Sobolev spaces on the metric space
following Shanmugalingam~\cite{Sh-rev}. Let $\Om\subseteq X$ be
nonempty and open.
        Whenever $u\in L^p(\Om)$, let
$$
        \|u\|_{\Np(\Om)} = \biggl( \int_\Om |u|^p \, \dmu
                + \inf_g  \int_\Om g^p \, \dmu \biggr)^{1/p},
$$
where the infimum is taken over all weak upper gradients of $u$.
The \emph{Newtonian space} on $\Om$ is the quotient space
$$
        \Np (\Om) = \{u: \|u\|_{\Np(\Om)} <\infty \}/{\sim},
$$
where  $u \sim v$ if and only if $\|u-v\|_{\Np(\Om)}=0$.
The Newtonian space is a Banach space and a lattice, moreover,
Lipschitz functions are dense;
for the properties of Newtonian spaces we refer to \cite{Sh-rev}
and Bj\"orn et al.~\cite{BBS}.

To be able to compare the boundary values of Newtonian functions
we need a Newtonian space with zero boundary values.
Let $E$ be a measurable subset of $X$. The \emph{Newtonian space with zero
boundary values} is the space
\[
\Np_0(E)=\{u|_{E} : u \in \Np(X) \text{ and }
        u=0 \text{ on } X \setm E\}.
\]
The space $\Np_0(E)$ equipped with the norm inherited from $\Np(X)$
is a Banach space, see Theorem~4.4 in Shanmugalingam~\cite{Sh-harm}.

We say that $u$ belongs to the \emph{local Newtonian space}
$\Np\loc(\Omega)$ if $u\in \Np(\Om')$ for every open
$\Om'\Subset\Omega$ (or equivalently that $u\in \Np(E)$ for every measurable $E\Subset\Omega$).

\section{Capacitary estimates}\label{S:ce}

The aim of this section is to establish sharp capacity estimates for
metric rings with unrelated radii. We emphasize an interesting
feature of Theorems \ref{thm:below} and \ref{thm:above} that cannot
be observed, for example, in the setting of Carnot groups. That is
the dependence of the estimates on the center of the ring. This is a
consequence of the fact that in this generality $Q(x_0) \neq Q$
where $x_0 \in X$, see Section~\ref{S:prelim}. The results in this
section will play an essential role in the subsequent developments,
see the forthcoming paper by Danielli and the authors~\cite{DaGaMa}.

For now on, let $0< r < \frac1{10}\diam(X)$ and fix a ball
$B(x_0,r) \subset X$. We have
the following estimate.

\medskip

\begin{lemma} \label{lemma:pointwisest}
Let $u\in\Lip(X)$ such that $u =0$ in $X\setminus B(x_0,r)$. Then
\begin{equation}
|u(x)| \leq C\biggl(r^{p_0-1}\int_{B(x_0,r)}\frac{(\Lip u)^{p_0}(y)d(x,y)}{\mu(B(x,d(x,y)))}\,d\mu(y)\biggr)^{1/p_0},
\end{equation}
for all $x \in B(x_0,r)$.
\end{lemma}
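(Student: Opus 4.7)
My plan is a standard dyadic telescoping argument centered at $x$, combined with a careful handling of the initial average term where the support condition on $u$ enters. Fix $x\in B(x_0,r)$ and set $r_k=2^{-k+1}r$, $B_k=B(x,r_k)$ for $k\ge 0$; then $B_0=B(x,2r)\supset B(x_0,r)$ (since $x\in B(x_0,r)$ gives $d(x,x_0)<r$), and $B_k$ shrinks to $x$. Because $u$ is Lipschitz, $u_{B_k}\to u(x)$, so $u(x)=u_{B_0}+\sum_{k\ge 0}(u_{B_{k+1}}-u_{B_k})$. For each increment the inclusion $B_{k+1}\subset B_k$ combined with local doubling yields $\mu(B_k)\le C\mu(B_{k+1})$, and the $(1,p_0)$-Poincar\'e inequality (using Cheeger's identification $g_u=\Lip u$ for Lipschitz $u$) gives
\[
 |u_{B_{k+1}}-u_{B_k}| \le C\vint_{B_k}|u-u_{B_k}|\,d\mu \le Cr_k\Bigl(\vint_{\tau B_k}(\Lip u)^{p_0}\,d\mu\Bigr)^{1/p_0}.
\]

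The delicate step is bounding the initial term $|u_{B_0}|$. I would exploit that $u\equiv 0$ on $B_0\setminus B(x_0,r)$ together with reverse doubling, which is available in the connected doubling Poincar\'e setting whenever $r<\frac{1}{10}\diam(X)$: enlarge $B_0$ to $\widetilde B=B(x,\sigma r)$ with $\sigma>2$ chosen so that $\mu(\widetilde B\setminus B_0)\gtrsim\mu(\widetilde B)$. Since $u$ vanishes on this annulus, Poincar\'e on $\widetilde B$ combined with $|u-u_{\widetilde B}|=|u_{\widetilde B}|$ there forces $|u_{\widetilde B}|\le Cr(\vint_{\tau\widetilde B}(\Lip u)^{p_0})^{1/p_0}$, and a second Poincar\'e comparison between the comparable balls $B_0\subset\widetilde B$ transfers the same bound to $|u_{B_0}|$. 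The initial term then has the same form as the $k=0$ summand and can be absorbed into the sum.

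To finish, I convert $\sum_k r_k\alpha_k$ with $\alpha_k=(\vint_{\tau B_k}(\Lip u)^{p_0})^{1/p_0}$ into the stated Riesz potential. H\"older's inequality applied to the factorisation $r_k\alpha_k=r_k^{1/p_0'}\cdot r_k^{1/p_0}\alpha_k$, together with $\sum_k r_k\le 4r$, gives
\[
 \Bigl(\sum_k r_k\alpha_k\Bigr)^{p_0} \le C\,r^{p_0-1}\sum_k\frac{r_k}{\mu(\tau B_k)}\int_{\tau B_k}(\Lip u)^{p_0}\,d\mu.
\]
Swapping sum and integral by Fubini leaves the inner sum $\sum_{k:\,y\in\tau B_k}r_k/\mu(\tau B_k)$ for each fixed $y$; by doubling this is a geometric-type series dominated by its largest term (at the $k$ with $r_k\sim d(x,y)$), hence $\le C\,d(x,y)/\mu(B(x,d(x,y)))$. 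Since $\Lip u=0$ $\mu$-a.e.~outside $B(x_0,r)$, the outer integration collapses to $B(x_0,r)$ and the claim follows. The main obstacle is the handling of $|u_{B_0}|$ via reverse doubling; everything else is a routine telescoping-and-annular-decomposition computation once that setup is in place.
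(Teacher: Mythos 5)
The first thing to say is that the paper contains no proof of this lemma to compare against: it cites M\"akel\"ainen (Theorem~3.2 and Remark~3.3 of \cite{Makelainen}) and moves on. So your proposal can only be measured against the standard chain argument, and most of it is indeed the standard, correct route: the telescoping over $B_k=B(x,2^{-k+1}r)$, the increment bound via doubling plus the $(1,p_0)$-Poincar\'e inequality, the treatment of $u_{B_0}$ via the vanishing of $u$ on an annulus together with reverse doubling (legitimate here, since the Poincar\'e inequality forces connectedness/quasiconvexity and $r<\tfrac1{10}\diam(X)$ guarantees the annulus is nonempty), the H\"older factorisation $r_k\alpha_k=r_k^{1/p_0'}\cdot r_k^{1/p_0}\alpha_k$, Fubini, and the use of $\Lip u=0$ a.e.\ on $\{u=0\}$ (which the paper records) to collapse the integration to $B(x_0,r)$. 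All of that is fine.

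The genuine gap is the very last step: the claim that the inner sum $S(y)=\sum_{k:\,y\in\tau B_k} r_k/\mu(\tau B_k)$ is, ``by doubling,'' a geometric-type series dominated by its term at the scale $r_k\sim d(x,y)$, hence $\le C\,d(x,y)/\mu(B(x,d(x,y)))$. Doubling alone does not give this. The ratio of consecutive terms is $t_k/t_{k+1}=2\,\mu(B(x,\tau r_{k+1}))/\mu(B(x,\tau r_k))$, which is only bounded \emph{above} by $2$; the series is dominated by its smallest-scale term precisely when the measure satisfies reverse doubling with exponent strictly greater than $1$, i.e.\ $\mu(B(x,2\rho))\ge 2^{q}\mu(B(x,\rho))$ with $q>1$. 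The paper's hypotheses admit spaces where this fails: on $X=\R$ with Lebesgue measure (doubling, $(1,1)$-Poincar\'e) all terms of $S(y)$ are comparable, so $S(y)\simeq\log\bigl(r/d(x,y)\bigr)$, while your claimed bound is the constant $d(x,y)/\mu(B(x,d(x,y)))=\tfrac12$. Nor is this a removable pointwise technicality: taking $\Lip u$ concentrated on a single annulus $\{d(x,y)\sim\delta\}$, your penultimate display is of size $r^{p_0-1}\delta\log(r/\delta)$, whereas the $p_0$-th power of the lemma's right-hand side is of size $r^{p_0-1}\delta$, so your route proves only a log-lossy weakening of the statement. What doubling does give for free is $\mu(B(x,\tau r_k))\ge\mu(B(x,d(x,y)))$ for every admissible $k$, whence $S(y)\le 4r/\mu(B(x,d(x,y)))$ --- that is, the lemma with $d(x,y)$ replaced by $r$ in the kernel. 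Upgrading that $r$ to $d(x,y)$ is exactly the sharp point the paper delegates to \cite{Makelainen}; it holds verbatim by your argument whenever the reverse-doubling exponent at $x$ exceeds $1$ (so in the Carnot/Carnot--Carath\'eodory examples with $Q(x)>1$ you are fine), but in the stated generality it requires a different argument --- note the lemma itself is still true on $\R$, where the kernel is constant and the claim reduces to H\"older's inequality, so it is your kernel bound, not the statement, that breaks.
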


\medskip

For the proof see, e.g.,
M\"akel\"ainen \cite{Makelainen}, Theorem 3.2 and Remark
3.3.

We are ready to prove sharp capacitary estimates for metric
rings with unrelated radii.

\medskip

\begin{thm}(Estimates from below) \label{thm:below}
Let $\Om \subset X$ be a bounded open set, $x_0 \in \Om$, and $Q(x_0)$ be
the pointwise dimension at $x_0$. Then there exists $R_0 = R_0(\Om)>0$
such that for any $0<r<R<R_0$
we have
\begin{align*}
& \Capc_{p_0}(\overline{B}(x_0,r),B(x_0,R)) \geq \\
& \left\{\begin{array}{ll}
C_1(1-\frac{r}{R})^{p_0(p_0-1)}\frac{\mu(B(x_0,r))}{r^{p_0}},\ \textrm{ if }\ 1<p_0<Q(x_0), \\
C_2(1-\frac{r}{R})^{Q(x_0)(Q(x_0)-1)}\biggl(\log\frac{R}{r}\biggr)^{1-Q(x_0)},
\ \textrm{ if }\ p_0=Q(x_0), \\
C_3(1-\frac{r}{R})^{p_0(p_0-1)}\biggl|(2R)^{\frac{p_0-Q(x_0)}{p_0-1}}-r^{\frac{p_0-Q(x_0)}{p_0-1}}\biggr|^{1-p_0},\ \textrm{ if }\ p_0>Q(x_0),
\end{array} \right.
\end{align*}
where
\begin{align*}
C_1 & = C\biggl(1-\frac1{2^{\frac{Q(x_0)-p_0}{p_0-1}}}\biggr)^{p_0-1}, \\
C_2 & =C\frac{\mu(B(x_0,r))}{r^{Q(x_0)}},\\
C_3 & =C\frac{\mu(B(x_0,r))}{r^{Q(x_0)}}\biggl(2^{\frac{p_0-Q(x_0)}{p_0-1}}-1\biggr)^{p_0-1},
\end{align*}
with $C > 0$ depending only on $p_0$ and the doubling constant
of $\Om$.
\end{thm}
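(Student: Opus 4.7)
The plan is a dyadic-telescoping argument combined with the weak $(1,p_0)$-Poincar\'e inequality. Let $u$ be a Lipschitz competitor for $\Capc_{p_0}(\overline B(x_0,r),B(x_0,R))$ with $0\le u\le 1$, $u\equiv 1$ on $\overline B(x_0,r)$, and $u\equiv 0$ outside $B(x_0,R)$. We shrink $R_0$ so that all balls appearing below lie in a compact set $K\subset\Om$ on which the doubling constant $C_K$ and the Poincar\'e constant $\tau_K$ are uniform. Using \eqref{lowerbound}, we fix $\lambda=\lambda(p_0,Q(x_0),C_K)$ large so that
\[
u_{B(x_0,\lambda R)} \le \frac{\mu(B(x_0,R))}{\mu(B(x_0,\lambda R))} \le C\lambda^{-Q(x_0)} \le \tfrac12,
\]
where the first inequality uses $0\le u\le 1$ and $u=0$ outside $B(x_0,R)$. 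With $\rho_k=2^k r$, $B_k=B(x_0,\rho_k)$, and $N$ chosen so that $B_{N+1}\supset B(x_0,\lambda R)$, we then have $u_{B_0}=1$ and $u_{B_{N+1}}\le \tfrac12$, hence $|u_{B_0}-u_{B_{N+1}}|\ge \tfrac12$.

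Telescoping along $B_0\subset\cdots\subset B_{N+1}$ and applying the weak $(1,p_0)$-Poincar\'e inequality with doubling to each consecutive pair yields
\[
\tfrac12 \le C\sum_{k=0}^{N}\rho_{k+1}\Big(\vint_{\tau_K B_{k+1}} g_u^{p_0}\,d\mu\Big)^{1/p_0}.
\]
A discrete H\"older inequality together with bounded overlap of the dilates $\{\tau_K B_{k+1}\}$ then gives
\[
\tfrac12 \le C\Big(\sum_{k=0}^{N}\frac{\rho_{k+1}^{p_0/(p_0-1)}}{\mu(B_{k+1})^{1/(p_0-1)}}\Big)^{(p_0-1)/p_0}\Big(\int g_u^{p_0}\,d\mu\Big)^{1/p_0}.
\]
Inserting the pointwise-dimension bound $\mu(B_{k+1})\ge C\,2^{(k+1)Q(x_0)}\mu(B(x_0,r))$ from \eqref{lowerbound}, each summand reduces to $Cr^{p_0/(p_0-1)}\mu(B(x_0,r))^{-1/(p_0-1)}\,2^{(k+1)\alpha}$ with $\alpha=(p_0-Q(x_0))/(p_0-1)$.

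The three cases of the theorem correspond to the three behaviors of $S_N:=\sum_{k=0}^N 2^{(k+1)\alpha}$. When $p_0<Q(x_0)$ (so $\alpha<0$), $S_N$ converges to a constant comparable to $(1-2^\alpha)^{-1}$, yielding the factor $(1-2^\alpha)^{p_0-1}=\bigl(1-2^{-(Q(x_0)-p_0)/(p_0-1)}\bigr)^{p_0-1}$ in $C_1$; when $p_0=Q(x_0)$, $S_N=N+1\approx\log_2(R/r)$, producing the logarithmic factor $(\log R/r)^{1-Q(x_0)}$ and the prefactor $\mu(B(x_0,r))/r^{Q(x_0)}$ in $C_2$; when $p_0>Q(x_0)$, $S_N=(2^{(N+1)\alpha}-1)/(2^\alpha-1)$ which, using $2^{N+1}r\sim 2R$, produces the factor $\bigl|(2R)^{(p_0-Q(x_0))/(p_0-1)}-r^{(p_0-Q(x_0))/(p_0-1)}\bigr|^{1-p_0}$ together with the gap constant $(2^\alpha-1)^{p_0-1}$ in $C_3$. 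Taking the infimum over admissible $u$ yields the capacity lower bound in each case. The multiplicative $(1-r/R)^{p_0(p_0-1)}$ reflects the slack when the ring is thin: the outermost Poincar\'e dilate $\tau_K B_{k+1}$ may then stick out of $B(x_0,R)$, forcing a shortening of the effective chain, and the resulting error is absorbed as a power of $(1-r/R)$. We expect the main technical obstacle to be the careful bookkeeping of constants, in particular ensuring that $C_1,C_2,C_3$ interpolate continuously as $p_0\to Q(x_0)^{\pm}$, which requires consistent use of the sharp pointwise-dimension bound \eqref{lowerbound} at $x_0$ rather than the cruder global bound \eqref{dc}, together with a delicate treatment of the borderline case $p_0=Q(x_0)$ where the sum becomes logarithmic.
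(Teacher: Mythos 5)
Your proposal reproduces the paper's dyadic bookkeeping (the same $\alpha=(p_0-Q(x_0))/(p_0-1)$, the same three-way case analysis, and constants of the right shape), but the route you take --- raw telescoping of Poincar\'e over the concentric balls $B_k=B(x_0,2^kr)$ followed by discrete H\"older --- breaks at exactly one point, and it is the point you flagged least: the claim of \emph{bounded overlap of the dilates} $\{\tau_K B_{k+1}\}$. These balls are concentric and increasing, so each one contains all the previous ones; at a point at distance $\approx 2^jr$ from $x_0$ the overlap of the family is $\approx N-j$, and globally it is $\sim\log_2(R/r)$, which is unbounded precisely in the regime $R\gg r$ where sharpness matters. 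Without bounded overlap your H\"older step only gives $\sum_{k=0}^N\int_{\tau_K B_{k+1}}g_u^{p_0}\,d\mu\le (N+1)\int g_u^{p_0}\,d\mu$, costing an extra factor $(N+1)^{1/p_0}\approx(\log(R/r))^{1/p_0}$. Tracing this through: for $1<p_0<Q(x_0)$ you get the lower bound $C\mu(B(x_0,r))r^{-p_0}(\log(R/r))^{-1}$ instead of $C\mu(B(x_0,r))r^{-p_0}$, and in the borderline case $p_0=Q(x_0)$ you get $(\log(R/r))^{-Q(x_0)}$ instead of the sharp $(\log(R/r))^{1-Q(x_0)}$. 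So the argument as written proves a strictly weaker statement in every case. (The alternative of avoiding H\"older by pulling $\bigl(\int g_u^{p_0}\bigr)^{1/p_0}$ out of each term fails differently: it produces the wrong exponent $(p_0-Q(x_0))/p_0$ in place of $(p_0-Q(x_0))/(p_0-1)$.)

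The paper circumvents exactly this obstacle: instead of chaining directly, it invokes the pointwise Adams-type estimate of Lemma~\ref{lemma:pointwisest} (due to M\"akel\"ainen), which already packages the telescoping with the power $p_0$ \emph{inside} a Riesz-type potential, $|u(x_0)|^{p_0}\le CR^{p_0-1}\int (\Lip u)^{p_0}(y)\,d(x_0,y)\,\mu(B(x_0,d(x_0,y)))^{-1}\,d\mu(y)$. H\"older is then applied to split off $\int(\Lip u)^{p_0}\,d\mu$ against the $p_0'$-th power of the kernel integrated over the \emph{disjoint} dyadic annuli $B(x_0,2^{k+1}r)\setminus\overline{B}(x_0,2^kr)$; disjointness of the annuli supplies exactly what you wanted bounded overlap for, and the per-annulus terms are $\approx 2^{k(p_0'-Q(x_0)(p_0'-1))}=2^{k\alpha}$, after which your three-case analysis of the geometric sum goes through verbatim. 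It is worth noting that your chaining strategy \emph{would} work if $X$ supported a $(1,1)$-Poincar\'e inequality (one can then sum the chain into the linear Riesz kernel pointwise before applying H\"older, as in Capogna--Danielli--Garofalo), but for $p_0>1$ this interchange is unavailable, and that is precisely why Lemma~\ref{lemma:pointwisest} is the key input here. Your vagueness about where the factor $(1-r/R)^{p_0(p_0-1)}$ comes from is, by contrast, harmless: since the theorem is a lower bound and the factor is at most $1$, omitting it only strengthens the conclusion; in the paper it arises concretely from the step trading $R^{p_0-1}$ for $(R/(R-r))^{p_0-1}$ using that $\Lip u$ vanishes on $B(x_0,r)$.
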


\begin{proof}
Let $u \in \Lip(X)$ such that $u=0$ on $X\setminus B(x_0,R)$, $u=1$
in $B(x_0,r)$, and $0\leq u \leq 1$. Then by Lemma~\ref{lemma:pointwisest}
\begin{align*}
1 = |u(x_0)| & \leq C\biggl(R^{p_0-1}\int_{B(x_0,R)}\frac{(\Lip u)^{p_0}(y)d(x_0,y)}{\mu(B(x_0,d(x_0,y)))}\,d\mu(y)\biggr)^{1/p_0} \\
& \leq C\biggl(\left(\frac{R}{R-r}\right)^{p_0-1}\int_{B(x_0,R)}\frac{(\Lip u)(y)\,d(x_0,y)}{\mu(B(x_0,d(x_0,y)))}\,d\mu(y)\biggr)^{1/p_0} \\
& \leq C\left(\frac{R}{R-r}\right)^{1-1/p_0}\biggl(\int_{B(x_0,R)}(\Lip u)^{p_0}(y)\,d\mu(y)\biggr)^{1/p_0^2} {}{} \\
&{}{} \cdot\biggl(\int_{B(x_0,R)\setminus \overline{B}(x_0,r)}\frac{d(x_0,y)^{p'_0}}{\mu(B(x_0,d(x_0,y)))^{p'_0}}\,d\mu(y)\biggr)^{1/p'_0p_0},
\end{align*}
where $p'_0= p_0/(p_0-1)$. We choose $k_0 \in \N$ so that $2^{k_0}r
\leq R < 2^{k_0+1}r$. Then we get
\begin{align*}
\int_{B(x_0,R)\setminus \overline{B}(x_0,r)}&\frac{d(x_0,y)}{\mu(B(x_0,d(x_0,y)))}\,d\mu(y) \\
& \leq C\sum_{k=0}^{k_0}\int_{B(x_0,2^{k+1}r)\setminus \overline{B}(x_0,2^k r)}\frac{d(x_0,y)^{p'_0}}{\mu(B(x_0,d(x_0,y)))^{p'_0}}\,d\mu(y) \\
& \leq C\sum_{k=0}^{k_0}\frac{(2^kr)^{p'_0}}{\mu(B(x_0,2^kr))^{p'_0-1}} \\
& \leq C\frac{r^{p'_0}}{\mu(B(x_0,r))^{p'_0-1}}
\sum_{k=0}^{k_0}2^{k(p'_0-Q(x_0)(p'_0-1))}.
\end{align*}
If $1<p_0<Q(x_0)$, then $p'_0-Q(x_0)(p'_0-1)<0$, and we obtain
\begin{equation} \label{p<Q}
1 \leq C_1^{-1}\biggl(\frac{R}{R-r}\biggr)^{p_0(p_0-1)}\frac{r^{p_0}}{\mu(B(x_0,r))}\int_{B(x_0,R)}(\Lip u)^{p_0}\,d\mu.
\end{equation}
If $p_0=Q(x_0)$, then $p'_0-Q(x_0)(p'_0-1)=0$, and we find
\begin{equation} \label{p=Q}
1 \leq C_2^{-1}\biggl(\frac{R}{R-r}\biggr)^{Q(x_0)(p_0-1)}\frac{r^{Q(x_0)}}{\mu(B(x_0,r))}k_0^{p_0-1}\int_{B(x_0,R)}(\Lip u)^{p_0}\,d\mu.
\end{equation}
Finally, if $p_0 > Q(x_0)$, then $p'_0-Q(x_0)(p'_0-1)>0$, and we have
\begin{multline} \label{p>Q}
1 \leq C_3^{-1}\biggl(\frac{R}{R-r}\biggr)^{p_0(p_0-1)}\frac{r^{Q(x_0)}}{\mu(B(x_0,r))} \\
\cdot \biggl|(2R)^{\frac{p_0-Q(x_0)}{p_0-1}}-r^{\frac{p_0-Q(x_0)}{p_0-1}}\biggr|^{p_0-1}\int_{B(x_0,R)}(\Lip u)^{p_0}\,d\mu.
\end{multline}
Taking the infimum over all competing $u$'s in
\eqref{p<Q}--\eqref{p>Q} we reach the desired conclusion.
\end{proof}

\medskip

\begin{remark}
Observe that if $X$ supports the weak $(1,1)$-Poincar\'e inequality,
i.e. $p_0 =1$, these estimates reduce to the capacitary estimates, e.g.,
in Capogna et al.~\cite[Theorem 4.1]{CaDaGa}.
\end{remark}

\medskip

\begin{thm}(Estimates from above) \label{thm:above}
Let $\Om$, $x_0$, and $Q(x_0)$ be as in Theorem~\ref{thm:below}.
Then there exists $R_0 = R_0(\Om)>0$ such that for any $0<r<R<R_0$ we have
\begin{align*}
& \Capc_{p_0}(\overline{B}(x_0,r),B(x_0,R)) \\
& \leq \left\{\begin{array}{ll}
C_4\frac{\mu(B(x_0,r))}{r^{p_0}}, & \textrm{ if }\,
1<p_0<Q(x_0), \\
C_5\biggl(\log\frac{R}{r}\biggr)^{1-Q(x_0)}, & \textrm{ if }\, p_0=Q(x_0), \\
C_6\left|(2R)^{\frac{p_0-Q(x_0)}{p_0-1}}-r^{\frac{p_0-Q(x_0)}{p_0-1}}\right|^{1-p_0}, &
\textrm{ if }\, p_0>Q(x_0),
\end{array} \right.
\end{align*}
where $C_4$ is a positive constant depending only
on  $p_0$ and the local doubling constant of $\Om$, whereas
\[
C_5 = C\frac{\mu(B(x_0,r))}{r^{Q(x_0)}},
\]
with $C>0$ depending only on  $p_0$ and the local doubling constant
of $\Om$. Finally,
\[
C_6 =C\biggl(2^{\frac{p_0-Q(x_0)}{p_0-1}}-1\biggr)^{-1},
\]
with $C > 0$ depending on $p_0$, the local parameters of $\Om$, and
$\mu(B(x_0,R_0))$.
\end{thm}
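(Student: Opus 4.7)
The approach is dual to that of Theorem~\ref{thm:below}: construct an explicit admissible test function for the condenser $(\overline{B}(x_0,r),B(x_0,R))$ and bound its $p_0$-energy from above. Throughout, I would use a radial profile $u(x)=\phi(d(x,x_0))$ where $\phi$ is non-increasing and Lipschitz, equal to $1$ on $[0,r]$ and to $0$ on $[R,\infty)$. By Lemma~\ref{lemma:chainrule} together with the equality $g_f=\Lip f$ for $f\in\Lip(X)$ recalled just after that lemma, $g_u(x)\leq|\phi'(d(x,x_0))|$, so the task reduces to computing $\int_X|\phi'(d(x,x_0))|^{p_0}\,d\mu$ for a well-chosen $\phi$ mimicking the $p_0$-harmonic radial function in a model space of homogeneous dimension $Q(x_0)$.

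The three natural profiles are: for $1<p_0<Q(x_0)$, the tent function $\phi(t)=\min(1,\max(0,2-t/r))$ supported in $[0,2r]$; for $p_0=Q(x_0)$, the logarithmic profile $\phi(t)=\log(R/t)/\log(R/r)$ on $[r,R]$; and for $p_0>Q(x_0)$, the power profile $\phi(t)=(R^\gamma-t^\gamma)/(R^\gamma-r^\gamma)$ with $\gamma=(p_0-Q(x_0))/(p_0-1)>0$. Case (i) is essentially immediate: after shrinking $R_0$ so that $R\geq 2r$ (and handling the remaining thin-ring range by monotonicity of $\Capc_{p_0}$ in the outer set together with local doubling), one gets
\[
\Capc_{p_0}(\overline{B}(x_0,r),B(x_0,R))\leq r^{-p_0}\mu(B(x_0,2r))\leq C\,\mu(B(x_0,r))/r^{p_0},
\]
which is the stated $C_4$.

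For the subtler Cases (ii) and (iii), I would decompose $B(x_0,R)\setminus B(x_0,r)\subseteq\bigcup_{k=0}^{k_0}A_k$, where $A_k=B(x_0,2^{k+1}r)\setminus B(x_0,2^kr)$ and $k_0$ is the unique nonnegative integer with $2^{k_0}r\leq R<2^{k_0+1}r$, and then dominate $|\phi'(d(x,x_0))|^{p_0}$ on each $A_k$ by its maximum over $[2^kr,2^{k+1}r]$. The key structural input is the \emph{upper} estimate from \eqref{bounds}, namely $\mu(B(x_0,t))\leq Ct^{Q(x_0)}\mu(B(x_0,R_0))/R_0^{Q(x_0)}$ for $0<t\leq R_0$, which is how the pointwise dimension enters the bound (and why $C_5$ and $C_6$ depend on $\mu(B(x_0,R_0))$). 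Substituting, each sum becomes a geometric series in $2^{k\gamma}$: in Case (ii) the common ratio is $1$, giving $k_0+1\sim\log(R/r)$ and hence $C_5$; in Case (iii) the ratio exceeds $1$, so the sum is comparable to $((2R)^\gamma-r^\gamma)/(r^\gamma(2^\gamma-1))$ upon using $2^{k_0+1}r\in(R,2R]$, yielding $C_6$.

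The main obstacle is retaining the sharp exponent $Q(x_0)$ throughout: the naive doubling inequality \eqref{dc} would introduce the coarser local dimension $Q\geq Q(x_0)$ and produce a strictly weaker estimate, so one must systematically invoke the $Q(x_0)$-upper bound from \eqref{bounds}. A secondary nuisance is the outermost shell $[2^{k_0}r,R]$, which can be arbitrarily thin; this is absorbed into $A_{k_0}$ at the cost of a harmless factor $2^\gamma$, and the shell-wise Lipschitz-constant estimate still applies there because $\phi$ is monotone and the shell is contained in $[2^{k_0}r,2^{k_0+1}r]$.
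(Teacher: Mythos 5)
Your proposal follows, in substance, the paper's own proof in the two delicate cases. For $p_0=Q(x_0)$ and $p_0>Q(x_0)$ the paper uses exactly your logarithmic and power profiles composed with $d(x_0,\cdot)$, the chain rule of Lemma~\ref{lemma:chainrule}, the same dyadic shells $B(x_0,2^{k+1}r)\setminus\overline{B}(x_0,2^kr)$ with $2^{k_0}r\le R<2^{k_0+1}r$, and, for $p_0>Q(x_0)$, the same upper inequality from \eqref{bounds}; your geometric-series bookkeeping via $2^{k_0+1}r\in(R,2R]$ reproduces the paper's derivation of $C_6$ essentially verbatim. The only genuinely different step is the case $1<p_0<Q(x_0)$: the paper keeps the power profile but builds it with the exponent $(p_0-Q)/(p_0-1)$ using the \emph{local} dimension $Q$ (its choice $i=0$), so that plain doubling \eqref{dc} makes the dyadic series convergent; your tent function supported in $B(x_0,2r)$ is simpler and yields the same bound $C\mu(B(x_0,r))/r^{p_0}$ by a single application of doubling, at the cost of requiring $R\ge 2r$.

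Two caveats. First, your fix for the thin-ring range $r<R<2r$ via ``monotonicity of $\Capc_{p_0}$ in the outer set'' goes the wrong way: shrinking the outer ball \emph{increases} the capacity, so $\Capc_{p_0}(\overline{B}(x_0,r),B(x_0,R))\ge\Capc_{p_0}(\overline{B}(x_0,r),B(x_0,2r))$ when $R<2r$, and no bound of the form $C\mu(B(x_0,r))/r^{p_0}$ uniform as $R\downarrow r$ can hold, since the capacity of a degenerating ring blows up. You should restrict case (i) to $2r\le R$ (or track the ratio $R/r$ in the constant); in fairness, the paper's own final inequality in this case silently discards the factor $\bigl|1-(R/r)^{(p_0-Q)/(p_0-1)}\bigr|^{-p_0}$, which likewise blows up as $R\to r$, so both arguments implicitly need $R/r$ bounded away from $1$. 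Second, in the case $p_0=Q(x_0)$ your use of the upper bound in \eqref{bounds} produces a constant of the form $C\mu(B(x_0,R_0))/R_0^{Q(x_0)}$ --- and you say as much --- whereas the theorem asserts $C_5=C\mu(B(x_0,r))/r^{Q(x_0)}$ with $C$ depending only on $p_0$ and the doubling constant; by \eqref{lowerbound} the quantity $\mu(B(x_0,r))/r^{Q(x_0)}$ is dominated by a constant times $\mu(B(x_0,R_0))/R_0^{Q(x_0)}$, so your constant is weaker and does not literally deliver the stated $C_5$. (The paper's one-line appeal to \eqref{dc} at this point is itself not airtight, since \eqref{dc} carries the exponent $Q$ rather than $Q(x_0)$; your route proves the same shape of estimate but with the $C_6$-type dependence on $\mu(B(x_0,R_0))$.)
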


\begin{proof}
For $i = 0,1$ and $p\neq Q(x_0)$, we define
\[
h(t) = \left\{\begin{array}{ll}
1, & \textrm{ if }\, 0\leq t \leq r, \\
\frac{t^{\frac{p_0-Q_i}{p_0-1}}-R^{\frac{p_0-Q_i}{p_0-1}}}{r^{\frac{p_0-Q_i}{p_0-1}}-R^{\frac{p_0-Q_i}{p_0-1}}}, & \textrm{ if }\, r\leq t \leq R, \\
0, & \textrm{ if }\, t \geq R,
\end{array} \right.
\]
where $Q_0 = Q$ and $Q_1 = Q(x_0)$. Note that $h\in L^{\infty}(\R)$,
$\spt(h') \subset [r,R]$, and that $h'\in L^{\infty}(\R)$, thus $h$
is a Lipschitz function. Let $u = h \circ d(x_0,y)$. By the chain
rule, see Lemma~\ref{lemma:chainrule}, we obtain for $\mu$-a.e.
\begin{align*}
g_u^{p_0} & \leq (|(h'\circ d(x_0,y))|\Lip d(x_0,y))^{p_0} \\
& = \biggl|\frac{p_0-Q_i}{p_0-1}\biggr|^{p_0}
\frac{d(x_0,y)^{\frac{(1-Q_i)p_0}{p_0-1}}}{\left|r^{\frac{p_0-Q_i}{p_0-1}}-
R^{\frac{p_0-Q_i}{p_0-1}}\right|^{p_0}}.
\end{align*}
Furthermore, we have that
\begin{align*}
\Capc_{p_0} & (\overline{B}(x_0,r),B(x_0,R)) \leq \int_{B(x_0,R)\setminus \overline{B}(x_0,r)}g_u^{p_0}\,d\mu \\
& \leq \sum_{k=0}^{k_0}\int_{B(x_0,2^{k+1}r)\setminus \overline{B}(x_0,2^kr)}g_u^{p_0}\,d\mu \\& \leq C\biggl|r^{\frac{p_0-Q_i}{p_0-1}}-R^{\frac{p_0-Q_i}{p_0-1}}\biggr|^{-p_0}\sum_{k=0}^{k_0}(2^kr)^{\frac{(1-Q_i)p_0}{p_0-1}}\mu(B(x_0,2^kr)),
\end{align*}
where $k_0\in\N$ is chosen so that $2^{k_0}r \leq R < 2^{k_0+1}r$.

At this point we need to make a distinction. If $1<p_0<Q(x_0)\leq Q$, then
we select $i=0$, and we have by the doubling property that
\begin{align*}
\Capc_{p_0} & (\overline{B}(x_0,r),B(x_0,R)) \\
& \leq C\biggl|r^{\frac{p_0-Q}{p_0-1}}-R^{\frac{p_0-Q}{p_0-1}}\biggr|^{-p_0}\mu(B(x_0,r))r^{\frac{(1-Q)p_0}{p_0-1}}\sum_{k=0}^{k_0}2^{k\frac{(p_0-Q)}{p_0-1}}
\\
& \leq C\biggl|1-\biggl(\frac{R}{r}\biggr)^{\frac{p_0-Q}{p_0-1}}\biggr|^{-p_0}\frac{\mu(B(x_0,r))}{r^{p_0}}.
\end{align*}
This completes the proof in the range $1<p_0<Q(x_0)$.

When $p_0>Q(x_0)$, from the second inequality in \eqref{bounds} it
follows
\[
\mu(B(x_0,2^kr)) \leq C2^{kQ(x_0)}r^{Q(x_0)},
\]
where the constant $C$ depends on $p_0$, the local doubling constant of
$\Om$, $\Om$, and $\mu(B(x_0,R_0))$.
Then we set $i=1$, and obtain
\begin{align*}
& \Capc_{p_0}(\overline{B}(x_0,r),B(x_0,R)) \\
& \leq C\biggl|r^{\frac{p_0-Q(x_0)}{p_0-1}}-R^{\frac{p_0-Q(x_0)}{p_0-1}}\biggr|^{-p_0}\mu(B(x_0,r))r^{Q(x_0)+\frac{(1-Q(x_0))p_0}{p_0-1}}\sum_{k=0}^{k_0}2^{k\frac{(p_0-Q(x_0))}{p_0-1}} \\
& \leq C(2^{\frac{p_0-Q(x_0)}{p_0-1}}-1)^{-1}\biggl|(2R)^{\frac{p_0-Q(x_0)}{p_0-1}}-r^{\frac{p_0-Q(x_0)}{p_0-1}}\biggr|^{1-p_0}.
\end{align*}
This end the proof in the range $p_0>Q(x_0)$.

When $p_0=Q(x_0)$ we set
\[
h(t) = \left\{\begin{array}{lr}
1, & \textrm{ if }\, 0\leq t \leq r, \\
\Big(\log \frac{R}{r}\Big)^{-1}\log\frac{R}{t}, & \textrm{ if }\, r\leq t \leq R, \\
0, & \textrm{ if }\, t \geq R,
\end{array} \right.
\]
As above, let $u = h \circ d(x_0,y)$, and Lemma~\ref{lemma:chainrule}
implies for $\mu$-a.e.
\[
g_u^{p_0} \leq \Big(\log\frac{R}{r}\Big)^{-p_0}\frac{1}{d(x_0,y)^{p_0}}.
\]
We have
\begin{align*}
\Capc_{p_0} & (\overline{B}(x_0,r),B(x_0,R)) \leq \int_{B(x_0,R)\setminus \overline{B}(x_0,r)}g_u^{p_0}\,d\mu \\
& \leq \sum_{k=0}^{k_0}\int_{B(x_0,2^{k+1}r)\setminus \overline{B}(x_0,2^kr)}g_u^{p_0}\,d\mu \\
& \leq C\Big(\log\frac{R}{r}\Big)^{-Q(x_0)}\sum_{k=0}^{k_0}(2^kr)^{-Q(x_0)}\mu(B(x_0,2^kr)) \\
& \leq C\frac{\mu(B(x_0,r))}{r^{Q(x_0)}}\Big(\log\frac{R}{r}\Big)^{-Q(x_0)},
\end{align*}
where the inequality \eqref{dc} was used and $k_0\in\N$ was chosen
so that $2^{k_0}r \leq R < 2^{k_0+1}r$. This completes the proof.
\end{proof}

\medskip

We have the following immediate corollary.

\medskip

\begin{cor} \label{corollary}
If $1< p_0 \leq Q(x_0)$, then we have
\begin{equation*} \label{eq:singleton}
\Capc_{p_0}(\{x_0\},\Om) = 0.
\end{equation*}
\end{cor}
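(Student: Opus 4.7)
The plan is to derive the corollary directly from the upper estimates in Theorem~\ref{thm:above}, by letting the inner radius shrink to zero with the outer radius held fixed, after first reducing the capacity of the singleton in $\Om$ to the capacity of a small metric ring inside $\Om$.

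\emph{Reduction.} Since $\Om$ is open and $x_0\in\Om$, pick $R_0>0$ such that $B(x_0,R_0)\subset\Om$ and Theorem~\ref{thm:above} is applicable. Fix $R\in(0,R_0)$, and for $0<r<R$ observe that any function $u\in\Np(X)$ with $u=1$ on $\overline{B}(x_0,r)$ and $u=0$ on $X\setm B(x_0,R)$ also satisfies $u=1$ on $\{x_0\}$ and $u=0$ on $X\setm\Om$, so $u$ is admissible for $\Capc_{p_0}(\{x_0\},\Om)$. Because $u\equiv 0$ on $\Om\setm B(x_0,R)$, the minimal weak upper gradient vanishes a.e.\ there, so $\int_\Om g_u^{p_0}\,\dmu=\int_{B(x_0,R)}g_u^{p_0}\,\dmu$. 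Taking the infimum over admissible $u$'s yields
\[
\Capc_{p_0}(\{x_0\},\Om)\leq \Capc_{p_0}(\overline{B}(x_0,r),B(x_0,R)).
\]

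\emph{Passing to the limit $r\to 0^+$.} I apply Theorem~\ref{thm:above} to the right-hand side. When $1<p_0<Q(x_0)$, the upper bound is $C_4\mu(B(x_0,r))/r^{p_0}$; the second inequality in~\eqref{bounds} gives $\mu(B(x_0,r))\leq Cr^{Q(x_0)}$, so
\[
\Capc_{p_0}(\overline{B}(x_0,r),B(x_0,R))\leq C\,r^{Q(x_0)-p_0}\longrightarrow 0
\]
because $Q(x_0)-p_0>0$. When $p_0=Q(x_0)$, the bound reads $(C\mu(B(x_0,r))/r^{Q(x_0)})(\log(R/r))^{1-Q(x_0)}$; by~\eqref{bounds} the prefactor is uniformly bounded in $r$, while $(\log(R/r))^{1-Q(x_0)}\to 0$ since $1-Q(x_0)=1-p_0<0$. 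In either case the right-hand side tends to $0$, so $\Capc_{p_0}(\{x_0\},\Om)=0$.

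\emph{Main obstacle.} The argument is essentially a routine limit once the reduction step is in hand; the only mildly delicate point is the claim that a Newtonian function vanishing on a measurable set has its minimal weak upper gradient vanishing a.e.\ there, which legitimates dropping the integration over $\Om\setm B(x_0,R)$ in the reduction step. This is a standard property of minimal weak upper gradients (as in Shanmugalingam~\cite{Sh-harm}), and once invoked the corollary follows from Theorem~\ref{thm:above} together with the pointwise dimension estimate~\eqref{bounds}.
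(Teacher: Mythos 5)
Your proof is correct and is precisely the argument the paper leaves implicit when it calls the statement an ``immediate corollary'' of Theorem~\ref{thm:above}: bound $\Capc_{p_0}(\{x_0\},\Om)$ by the ring capacities $\Capc_{p_0}(\overline{B}(x_0,r),B(x_0,R))$ via monotonicity (using that $g_u=0$ a.e.\ where $u\equiv 0$), then let $r\to 0^{\limplus}$ and use \eqref{bounds} to see the upper bounds tend to $0$ in both ranges $1<p_0<Q(x_0)$ and $p_0=Q(x_0)$. Your handling of the case $p_0=Q(x_0)$, noting $1-Q(x_0)=1-p_0<0$, is exactly the point that makes the logarithmic bound vanish in the limit.
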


\medskip

We close this section by stating for completeness
the following well-known
estimate for the conformal capacity.
In the setting of Carnot groups it was first proved by
Heinonen in \cite{Hei}.
For a discussion in metric spaces, see
Heinonen--Koskela~\cite[Theorem 3.6]{HeKo} and
Heinonen~\cite[Theorem 9.19]{heinonen}. In \cite{heinonen}
a weak $(1,1)$-Poincar\'e inequality is assumed. By obvious
modifications, however, the proof carries out in our setting as well.
We hence omit the proof.

\medskip

\begin{thm} \label{thm:CapAwayZero}
Suppose that $E$ and $F$ are connected closed subsets of $X$ such
that $F$ is unbounded and $F\cap\partial B(z,r) \neq \varnothing$,
and $E$ joins $z$ to $\partial B(z,r)$. Then there is a uniform
constant $C
> 0$, depending only on X, such that
\begin{equation*}
\capc_Q(E,F;X) \geq C > 0.
\end{equation*}
\end{thm}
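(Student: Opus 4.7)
The plan is to argue via the equivalence between $Q$-capacity and $Q$-modulus, and then establish the required positive lower bound on the modulus by the chaining argument of Heinonen and Koskela, adapted to the present local setting.

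First, I would recall that $\capc_Q(E,F;X)$ equals the $Q$-modulus $\mathrm{Mod}_Q(\Gamma(E,F;X))$ of the family of rectifiable curves joining $E$ and $F$ in $X$; this follows from Kallunki--Shanmugalingam~\cite{KaSh}, already cited earlier in the paper. So it suffices to bound $\mathrm{Mod}_Q(\Gamma(E,F;X))$ from below by a positive constant. The geometric input is easy to extract: picking $x_E \in E \cap \partial B(z,r)$ (which exists since $E$ joins $z$ to $\partial B(z,r)$ and is connected) and $x_F \in F \cap \partial B(z,r)$ (given), we have $d(x_E,x_F) \leq 2r$, $\diam E \geq r$, and $\diam F = \infty$, so the relative distance $\Delta(E,F) := \dist(E,F)/\min(\diam E,\diam F)$ is bounded by $2$. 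The goal therefore reduces to a Loewner-type inequality $\mathrm{Mod}_Q(\Gamma(E,F;X)) \geq \phi(\Delta(E,F))$ with $\phi$ positive.

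The core of the proof, which I expect to be the main obstacle, is this Loewner-type estimate. I would argue by contradiction: assume there exist admissible functions $u_k$ for the condenser with $u_k = 0$ on $E$, $u_k = 1$ on $F$, $0 \le u_k \le 1$, and weak upper gradients $g_{u_k}$ satisfying $\int_X g_{u_k}^Q\,d\mu \to 0$. Since $p_0 \leq Q$, Hölder's inequality upgrades the local $(1,p_0)$-Poincaré inequality to a local $(1,Q)$-Poincaré inequality on every compact set with fixed local parameters. Combined with the local doubling condition, this lets me construct, by the standard "pipe'' argument along the continua $E$ and $F$, a chain of balls of controlled radii with substantial overlap, covering portions of $E$ and of $F$ lying in a fixed compact neighborhood of $\overline{B}(z,r)$. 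Telescoping the ball-averages of $u_k$ along this chain and applying Poincaré on each ball yields an estimate of the form $|u_k(x_E) - u_k(x_F)| \leq C\bigl(\int_X g_{u_k}^Q\,d\mu\bigr)^{1/Q}$, which contradicts $|u_k(x_E) - u_k(x_F)| = 1$ for $k$ large.

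The delicate point at the critical exponent $p = Q$ is that a naive telescoping over dyadic scales generates a logarithmic factor that does not close the argument. The remedy, as in Heinonen--Koskela~\cite[Theorem~3.6]{HeKo} and Heinonen~\cite[Theorem~9.19]{heinonen}, is to spread the averaging over the continua $E$ and $F$ themselves (whose diameters provide a fixed length scale) rather than chain between single points, or equivalently to invoke the BMO-John--Nirenberg estimate for Newtonian functions with small $Q$-energy. This is the step where the hypothesis that both $E$ and $F$ are \emph{connected} is essential. The "obvious modifications'' promised in the statement amount to confining the entire construction to a compact subset of $X$ containing $\overline{B}(z,r)$ and a suitable neighborhood, on which the constants $C_K$, $R_K$, $\tau_K$ from our local hypotheses are all uniform, and checking that the $(1,Q)$-Poincaré inequality suffices in place of the $(1,1)$-Poincaré inequality used in~\cite{heinonen}.
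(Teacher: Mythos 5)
The paper gives no proof of this theorem---it explicitly omits it, citing Heinonen--Koskela~\cite[Theorem~3.6]{HeKo} and Heinonen~\cite[Theorem~9.19]{heinonen} and asserting that ``obvious modifications'' adapt those arguments to the local setting---and your proposal reconstructs precisely that route: the relative-distance bound $\Delta(E,F)\le 2$ from the hypotheses on $E$ and $F$, the H\"older upgrade of the local $(1,p_0)$-Poincar\'e inequality to a $(1,Q)$-inequality (which, as you implicitly acknowledge, requires $p_0\le Q$, an assumption the theorem's statement leaves tacit), the Heinonen--Koskela energy-spreading chain along the continua to defeat the critical exponent, and the confinement to a compact neighborhood of $\overline{B}(z,r)$ where the local parameters are uniform. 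Your approach therefore coincides with the paper's intended one; the single caution is that your displayed intermediate inequality $|u_k(x_E)-u_k(x_F)|\le C\bigl(\int_X g_{u_k}^Q\,d\mu\bigr)^{1/Q}$ is false as a pointwise statement at the critical exponent (singletons have zero $Q$-capacity, cf.\ Corollary~\ref{corollary}), but you correctly diagnose and repair exactly this defect in your final paragraph, so the sketch as a whole stands.
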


\section{A remark on the existence of singular functions}
\label{S:singfunctions}

In this section we give a remark on the existence of
singular functions or \p$_0$-harmonic Green's
functions on relatively compact
domains $\Om \subset X$. The existence of singular
functions in metric space setting was proved in
Holopainen--Shanmugalingam~\cite{HoSha} in $Q$-regular metric
spaces (see below) supporting a local Poincar\'e inequality.

We start off by recalling the definition of \p$_0$-harmonic funcition on
metric spaces. Let $\Om\subset X$ be a domain. A
function $u \in N^{1,p_0}\loc(\Om)\cap C(\Om)$ is \emph{\p$_0$-harmonic} in $\Om$ if
for all relatively compact $\Om'\subset\Om$ and for all $v$ such that $u-v \in N^{1,p_0}_0(\Om')$
\[
\int_{\Om'}g_u^{p_0}\,d\mu \leq \int_{\Om'}g_v^{p_0}\,d\mu.
\]

It is known that nonnegative \p$_0$-harmonic functions satisfy
Harnack's inequality and the strong maximum principle, there
are no non-constant nonnegative \p$_0$-harmonic functions on all of $X$,
and \p$_0$-harmonic functions have locally H\"older continuous
representatives. See Kinnunen--Shanmugalingam~\cite{KiSh1}
(see also \cite{BjMa}).

In this section we also assume that $X$ is
\emph{linearly locally connected}: there exists a constant $C\geq 1$ such that
each point $x\in X$ has a neighborhood $U_x$ such that for every
ball $B(x,r)\subset U_x$ and for every pair of points $y,z \in
B(x,2r)\setminus \overline{B}(x,r)$, there exists a path in
$B(x,Cr)\setminus\overline{B}(x,r/C)$ joining the points $y$ and $z$.

The following definition was given by Holopainen and Shanmugalingam
in \cite{HoSha}.

\medskip

\begin{deff}
Let $\Om$ be a relatively compact domain in $X$ and $x_0 \in \Om$.
An extended real-valued function $G=G(\cdot,x_0)$ on $\Om$ is said to be
a \emph{singular function with singularity at $x_0$} if
\medskip
\begin{enumerate}

\item[1.]
$G$ is \p$_0$-harmonic and positive in $\Om\setminus\{x_0\}$,

\item[2.]
$G|_{X\setminus\Om}=0$ and $G \in N^{1,p_0}(X\setminus B(x_0,r))$
for all $r>0$,

\item[3.]
$x_0$ is a singularity, i.e.,
\[
\lim_{x\to x_0} G(x) = \Capc_{p_0}(\{x_0\},\Om)^{1/(1-p_0)},
\]
and $\lim_{x\to x_0} G(x) = \infty$ if $\Capc_{p_0}(\{x_0\},\Om)^{1/(1-p_0)} =0$,

\item[4.]
whenever $0 \leq \alpha < \beta < \sup_{x\in\Om}G(x)$,
\[
C_1(\beta-\alpha)^{1-p_0} \leq
\Capc_{p_0}(\Om^\beta,\Om_\alpha) \leq C_2(\beta-\alpha)^{1-p_0},
\]
where $\Om^\beta = \{x\in\Om:\ G(x)\geq \beta\}$, $\Om_\alpha = \{x\in\Om:\ G(x) > \alpha\}$, and $0< C_1,C_2 <\infty$ are constants depending only on $p_0$.

\end{enumerate}
\end{deff}

\medskip

Note that the singular function is necessarily non-constant,
and continuous on $\Om\setminus\{x_0\}$.

We have the following theorem on the existence.

\medskip

\begin{thm} \label{thm:existence}
Let $\Om$ be a relatively compact domain in $X$, $x_0 \in \Om$,
and $Q(x_0)$ the pointwise dimension at $x_0$. Then
there exists a singular function on $\Om$ with singularity at $x_0$.
Moreover, if $p_0 \leq Q(x_0)$, then every singular function $G$ with
singularity at $x_0$ satisfies the condition
\[
\lim_{x\to x_0}G(x) = \infty.
\]
\end{thm}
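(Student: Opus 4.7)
The plan is to adapt the construction of Holopainen and Shanmugalingam \cite{HoSha} to our setting, with the pointwise dimension $Q(x_0)$ playing the role of the global Ahlfors regularity dimension. First I would exhaust $\Om\setminus\{x_0\}$ by an increasing sequence of open sets $\Om_j := \Om\setminus\overline{B}(x_0,r_j)$ with $r_j\searrow 0$, and for each $j$ solve the \p$_0$-obstacle problem to produce the capacitary potential $u_j\in N^{1,p_0}(X)$ which is \p$_0$-harmonic in $\Om_j$, satisfies $u_j=1$ on $\overline{B}(x_0,r_j)$ and $u_j=0$ on $X\setminus\Om$, and minimizes $\int g^{p_0}\,d\mu$ among admissible competitors. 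Existence, uniqueness, and basic regularity of $u_j$ follow from standard variational arguments in Kinnunen--Shanmugalingam \cite{KiSh1}.

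Next I would normalize by setting
\[
G_j := a_j\,u_j, \qquad a_j := \Capc_{p_0}(\overline{B}(x_0,r_j),\Om)^{1/(1-p_0)}.
\]
Since the exponent $1/(1-p_0)$ is negative, Theorems \ref{thm:below} and \ref{thm:above} give effective two-sided control on $a_j$ in terms of $r_j$ and $Q(x_0)$. By outer regularity of capacity along the nested compacta $\overline{B}(x_0,r_j)\searrow\{x_0\}$, one has $a_j\to \Capc_{p_0}(\{x_0\},\Om)^{1/(1-p_0)}$. This particular normalization is chosen because the scaling of \p$_0$-capacitary potentials turns the inequalities in condition (4) of the singular-function definition into comparable identities for each $G_j$, with constants depending only on $p_0$, so that (4) survives passage to the limit.

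The third step is compactness. On any compact set $K\subset\Om\setminus\{x_0\}$, once $r_j$ is small enough that $\overline{B}(x_0,r_j)\cap K=\varnothing$, Harnack's inequality and the local H\"older estimate for nonnegative \p$_0$-harmonic functions from \cite{KiSh1} give local equiboundedness and equicontinuity of $\{G_j\}$ on $K$; the effective bounds on $a_j$ prevent the sequence from collapsing to $0$ or blowing up on $K$. Arzel\`a--Ascoli yields a subsequence $G_j\to G$ locally uniformly on $\Om\setminus\{x_0\}$, and stability of \p$_0$-harmonicity under locally uniform convergence delivers (1). Property (2) follows because each $u_j$ vanishes outside $\Om$ and has controlled Dirichlet energy away from $x_0$; property (3) follows from $G_j\equiv a_j$ on $\overline{B}(x_0,r_j)$, the identification of $\lim a_j$ above, and the continuity of $G$ on $\Om\setminus\{x_0\}$; property (4) is inherited from the corresponding identity for the $G_j$'s.

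For the last assertion, observe that item (3) determines $\lim_{x\to x_0}G(x)$ solely through $\Capc_{p_0}(\{x_0\},\Om)$, so the conclusion applies to every singular function, not just the one constructed. When $p_0\leq Q(x_0)$, Corollary \ref{corollary} gives $\Capc_{p_0}(\{x_0\},\Om)=0$, and since $1/(1-p_0)<0$ we obtain $\lim_{x\to x_0}G(x)=+\infty$ at once. The main technical hurdle I anticipate is the passage to the limit in property (4): the super-level sets $\Om^\beta$ and $\Om_\alpha$ only arise as limits of the corresponding level sets of the $G_j$, and capacity is not continuous in general under such limits. What makes this step work is the combination of locally uniform convergence $G_j\to G$ with the sharp two-sided estimates from Section \ref{S:ce}, which quantify uniformly in $j$ how the potentials $G_j$ concentrate near $x_0$.
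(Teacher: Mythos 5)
Your proposal takes essentially the same route the paper intends: the paper omits the proof, saying only that it follows the Holopainen--Shanmugalingam construction via the Harnack inequality on spheres together with Corollary~\ref{corollary} (which, combined with item (3) of the definition of a singular function and the negative exponent $1/(1-p_0)$, gives the blow-up for $p_0 \leq Q(x_0)$ exactly as you argue), with the sharp estimates of Section~\ref{S:ce} standing in for the $Q$-regularity assumed in \cite{HoSha}. The one ingredient you gloss over is that your crucial step --- the uniform two-sided bounds on $a_j u_j$ on compacta away from $x_0$, which come from comparing values of the capacitary potentials on spheres $\partial B(x_0,s)$ --- is precisely where the Harnack inequality on spheres, and hence the linear local connectedness of $X$, is needed, as the paper explicitly points out.
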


\medskip

Essentially, the proof follows from the Harnack inequality on spheres
and Corollary~\ref{corollary}. In particular, it is in
the Harnack inequality
on spheres that $X$ is needed to be linearly locally
connected. See, e.g.,
Bj\"orn et al.~\cite[Lemma 5.3]{BMcSha}). We omit the proof.

\medskip

\begin{remark} \label{remark}
The theorem was first proved by Holopainen and Shanmugalingam in
\cite[Theorem 3.4]{HoSha} under the additional assumption that
the measure on $X$ is $Q$-regular, i.e., for all balls $B(x,r)$
a double inequality
\[
C^{-1}r^Q\leq\mu(B(x,r))\leq Cr^Q
\]
holds. (If $\mu$ is $Q$-regular then $X$ is called an Ahlfors regular
space.) There are, however, many instanses where this is not
satisfied. For example, the weights modifying the Lebesgue measure in
$\R^n$, see \cite{HeKiMa}, or systems of vector fields of H\"ormander type,
see e.g. Capogna et al.~\cite{CaDaGa}, are, in general,
not $Q$-regular for any $Q>0$. In this sense our observation seems to
generalize slightly the results obtained in \cite{HoSha}.
\end{remark}

\end{document}